\newtheorem{thm}{Theorem}
\newtheorem{lem}{Lemma}
\newtheorem{clm}{Claim}
\newtheorem{rem}{Remark}
\newtheorem{cor}{Corollary}
\newtheorem{ques}{Question}
\newcommand{\oga}{\overline{\gamma}}
\newcommand{\tga}{T_\gamma(\alpha)}
\begin{document}

\title{Distance $4$ Curves on Closed Surfaces of Arbitrary Genus
}

\author{Kuwari Mahanta \and Sreekrishna Palaparthi*} 
\maketitle

\begin{abstract}
Let $S_g$ denote a closed, orientable surface of genus $g \geq 2$ and $\mathcal{C}(S_g)$ be the associated curve complex. The mapping class group of $S_g$, $Mod(S_g)$ acts on $\mathcal{C}(S_g)$ by isometries. Since Dehn twists about certain curves generate $Mod(S_g)$, one can ask how Dehn twists move specific vertices in $\mathcal{C}(S_g)$ away from themselves. We show that if two curves represent vertices at a distance $3$ in $\mathcal{C}(S_g)$ then the Dehn twist of one curve about another yields two vertices at distance $4$. This produces many tractable examples of distance $4$ vertices in $\mathcal{C}(S_g)$. We also show that the minimum intersection number of any two curves at a distance $4$ on $S_g$ is at most $(2g-1)^2$.\\
\textbf{Keywords :} Curve complex, Minimal intersection number, Distance $4$ curves, Filling pairs of curves\\
\textbf{Mathematics Subject Classification :} 27M60, 20F65, 20F67
\end{abstract}

\section{Introduction}
\label{section:introduction}
Let $S_g$ denote a closed, orientable surface of genus $g \geq 2$. Throughout this article, a curve on $S_g$ will mean an essential simple closed curve on it. For two curves, $\alpha, \beta$ in $S_g$, $i(\alpha,\beta)$ denotes their geometric intersection number and $T_\alpha(\beta)$ denotes the Dehn twist of the curve $\beta$ about the curve $\alpha$. In \cite{H1}, Harvey associated with $S_g$ a simplical complex called the complex of curves which is denoted by $\mathcal{C}(S_g)$ and defined as follows. The $0$-skeleton, $\mathcal{C}^0(S_g)$ of this complex is in one-to-one correspondence with isotopy classes of essential simple closed curves on $S_g$.  Two vertices span an edge in $\mathcal{C}(S_g)$ if and only if these vertices have mutually disjoint representatives. $\mathcal{C}^0(S_g)$ can be equipped with a metric, $d$ by defining the distance between any two vertices to be the minimum number of edges in any edge path between them in $\mathcal{C}(S_g)$. By the distance between two curves on $S_g$, we mean the distance between the corresponding vertices in $\mathcal{C}(S_g)$.  The minimal intersection number between any two curves on $S_g$ which are at a distance $n$ is denoted by $i_{min}(g,n)$.

\begin{figure}
\centering
\includegraphics[scale=0.2]{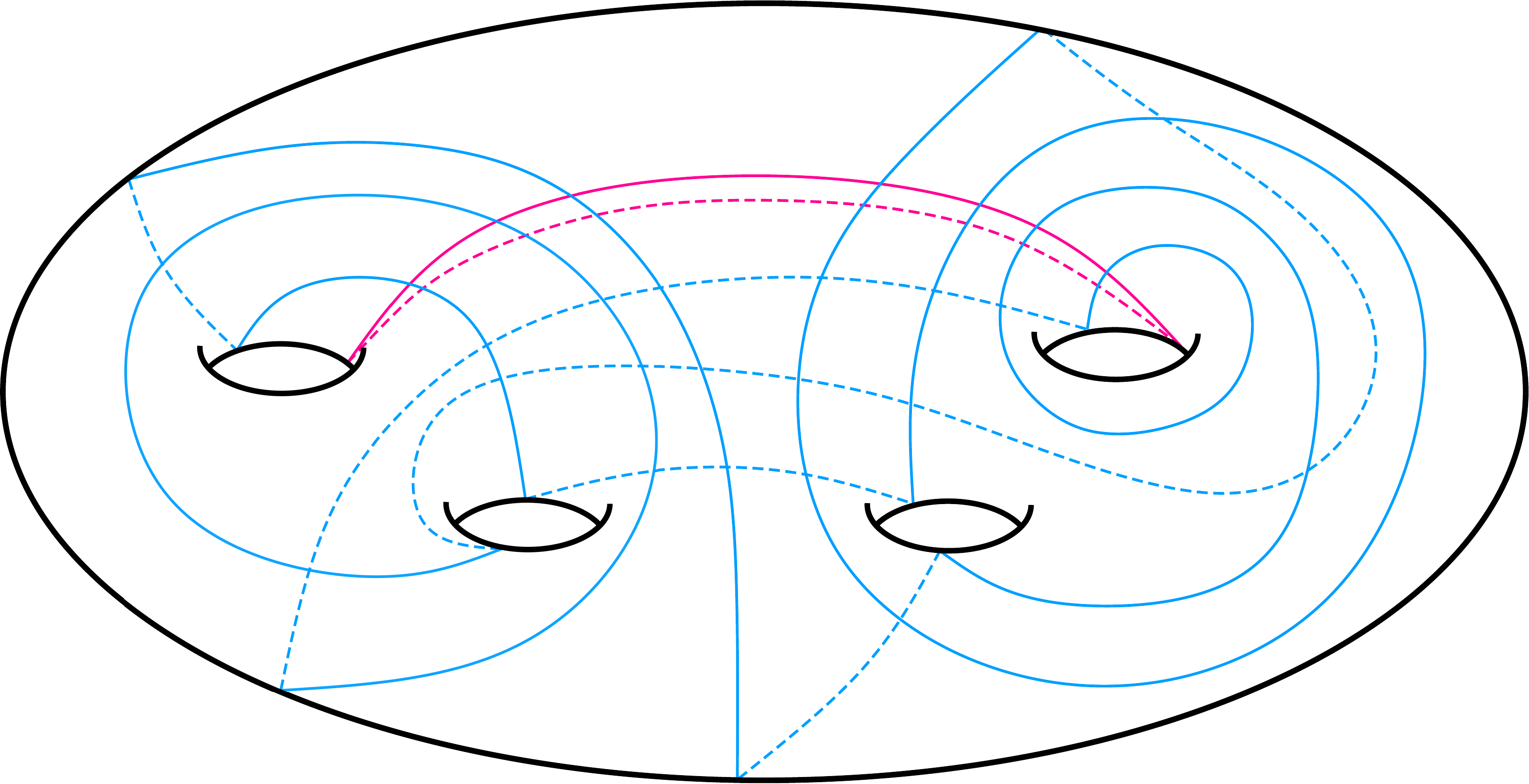}
\caption{Distance $3$ vertices in $\mathcal{C}(S_4)$}
\label{fig:dist3_eg}
\end{figure}

\begin{figure}
\centering
\includegraphics[scale=0.2]{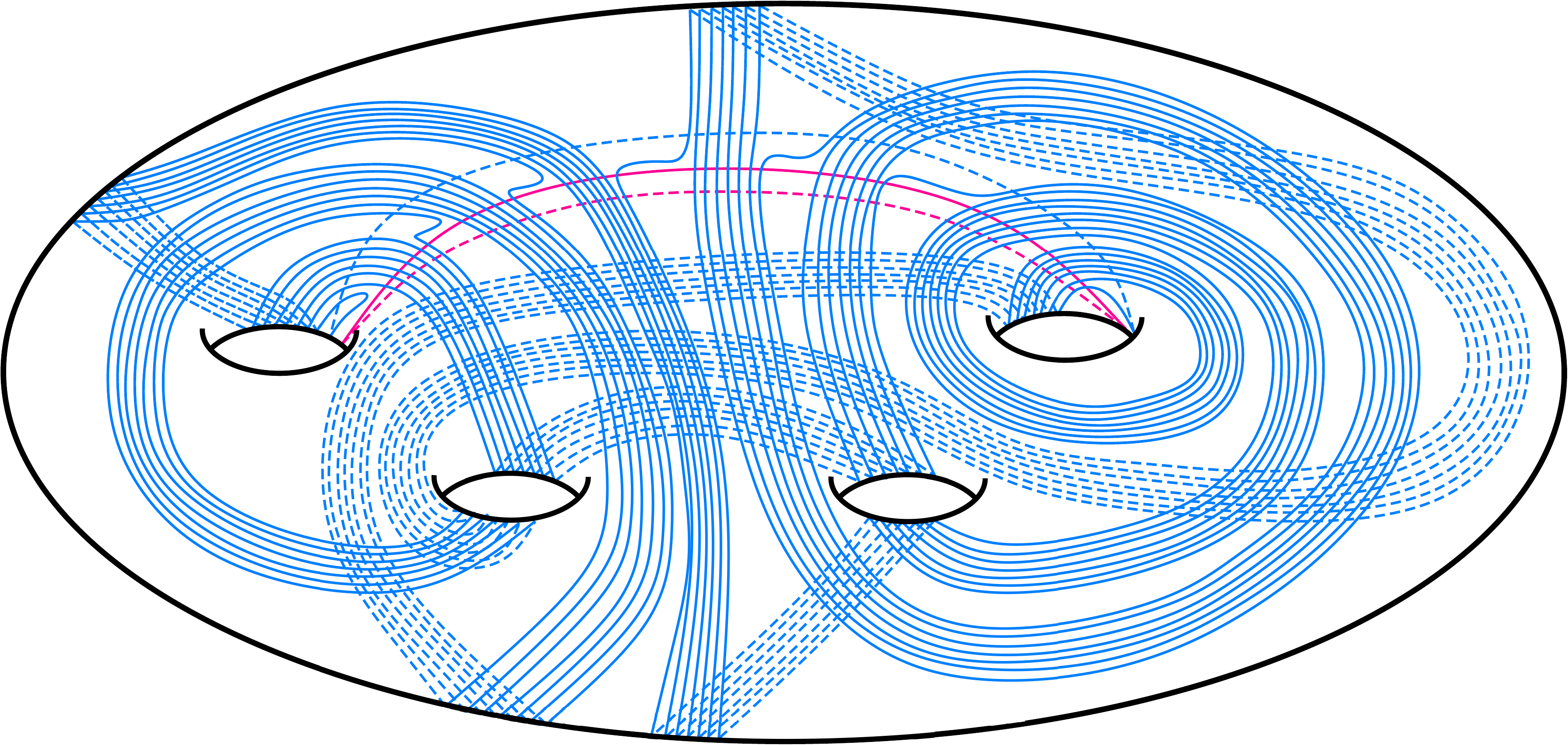}
\caption{Distance $4$ vertices in $\mathcal{C}(S_4)$}
\label{fig:dist4_eg}
\end{figure}

Masur and Minsky, in their seminal paper \cite{MM2}, proved that $\mathcal{C}(S)$ is a $\delta$-hyperbolic space with the metric $d$. Later, it was shown that the $\delta$ can be chosen to be independent of the surface $S_g$, see \cite{A1}, \cite{B1}, \cite{CRS}, \cite{HPW}, \cite{PS}. The coarse geometry of the curve complex has various applications to $3$-manifolds, Teichmuller theory and mapping class groups. One can see \cite{M1} for many such applications.

\cite{S1}, \cite{W1}, \cite{W2} and \cite{BMM} gave algorithms to compute distances between two vertices in $\mathcal{C}(S_g)$. The algorithm in \cite{BMM}, although for closed surfaces is by far the most effective in calculating distances in $\mathcal{C}(S_g)$. Given vertices, $\nu$, $\mu$ in $\mathcal{C}(S_g)$, the algorithm works by giving an initially efficient geodesic, $\nu_0 = \nu$, $\nu_1$, \dots, $\nu_n=\mu$ where $\nu_1$ is chosen from a list of $n^{6g-6}$ possible vertices.

In \cite{AH1}, Aougab and Huang gave all $Mod(S_g)$ orbits of minimally intersecting distance $3$ curves in $\mathcal{C}(S_g)$ by giving the curves as permutations which are solutions to a system of equation in the permutation group of order $8g-4$. They further showed that for genus, $g \geq 3$, $i_{min}(g,3) = 2g-1$ and that all pairs of curves which intersect at $2g-1$ points and cuts up $S_{g\geq 3}$ into a disc are at distance $3$.

For vertices at distance $4$ in $\mathcal{C}(S_g)$, we found only limited pictures (like, Figure \ref{fig:dist4_eg}) of such curves in $S_{g\leq3}$ and none for $S_{g>3}$. In \cite{GMMM}, the authors gave a test to determine when two vertices in $\mathcal{C}(S_g)$ are at a distance $\geq 4$ using the efficient geodesic algorithm of \cite{BMM}. Using the MICC software, they showed that $i_{min}(2,4) =12$ by giving all minimally intersecting pairs of curves at distance $4$. They also gave examples of curves which are at a distance $4$ on a surface of genus $3$ and concluded that $i_{min}(3,4) \leq 29$ (refer \cite{GMMM} Theorem 1.8).

Although $i_{min}(g,4)$ is still not known, in \cite{AT1}, Aougab and Taylor proved that $i_{min}(g,4) = O(g^2)$ by answering a more general question by Dan Margalit that $i_{min}(g,n) = O(g^{n-2})$.

In this article, we give a method to construct examples of vertices at distance $4$ in $\mathcal{C}(S_g)$ and improve the known upper bound of $i_{min}(g,4)$ to $(2g-1)^2$. In particular, using any of the minimally intersecting filling pairs of curves on $S_3$, as described in \cite{AH1}, one can get examples of curves at distance $4$ in $\mathcal{C}(S_3)$ which intersect at $25$ points. This implies that $i_{min}(3,4) \leq 25$. Using a minimally intersecting pair of curves (Figure \ref{fig:dist3_eg}) described in \cite{AH1} as a permutation of $28$ symbols, we use our method to construct a pair of distance $4$ curves in $\mathcal{C}(S_4)$ as in Figure \ref{fig:dist4_eg}.

Such workable examples of distance $4$ curves were a result of trying to investigate the effect of certain Dehn twists on distances in $\mathcal{C}(S_g)$ as follows:
\begin{rem}
If $d(\alpha,\gamma) = 1$ or $2$, then $d(\alpha, \tga) = d(\alpha,\gamma)$.
\end{rem}
In general, one can ask the following question:

\begin{ques}\label{question:q1} If $d(\alpha, \gamma) > 2$, then what is the relation between $d(\alpha, \gamma)$ and $d(\gamma, T_\gamma(\alpha))$?\end{ques}

In this article, we prove that if $d(\alpha, \gamma)\geq 3$ then $d(\alpha, \tga) \geq 3$. We then answer the question \ref{question:q1} for $d(\alpha, \gamma) =3$ and show that $d(\alpha, T_\gamma(\alpha)) = d(\alpha, \gamma) +1$. 

\section{Setup}
\label{section:special_position}
For any ordered index in this work, we follow cyclical ordering. For instance, if $i \in \{1, 2, \dots, k\}$, $i=k+1$ will indicate $i=1$.

Two curves, $\nu_1$ and $\nu_2$ on $S_g$ are said to be a filling pair if every component of $S_g \setminus \{\nu_1, \nu_2\}$ is a disk. A component, $D$ of $S_g \setminus \{\nu_1, \nu_2\}$ is said to be an $2n$-gon if its boundary comprises of $n$ arcs of $\nu_1$ and $\nu_2$. Consider a pair, $\alpha, \beta$, of filling curves on $S_g$ with geometric intersection number, $i(\alpha, \beta)=k$. Let their set of intersection points be $\{w_1, \dots, w_k\}$. Define a triangulation, $G$ of $S$ as follows:

Let $\{w_1', \dots, w_k'\}$ be the $0$-skeleton of $G$. There is an edge between $w_i'$ and $w_j'$ in $G$ if and only if there is an arc of $\alpha$ or, $\beta$ between the two intersection points, $w_i, w_j$ in $S$. As $\alpha$ and $\beta$ are filling pairs, for each disk component, $D$ of $S_g \setminus \{\alpha, \beta\}$ attach a disk to the cycle formed by the edges of $G$ corresponding to the arcs of $\alpha, \beta$ that form the boundary of $D$. Using the Euler characteristic of a $2$-dimensional complex, the number of faces, $f$ of $G$ is given by $f = k+2-2g$.

Consider a geodesic, $\nu_0, \dots, \nu_N$ of length $N$ in $\mathcal{C}^0(S)$. An arc, $\omega$ in $S$ is a \textit{reference arc} for the triple $\nu_0$, $\nu_1$, $\nu_N$ if $\omega$ and $\nu_1$ are in minimal position and the interior of $\omega$ is disjoint from $\nu_0 \cup \nu_N$. The oriented geodesic $\nu_0, \dots, \nu_N$ is said to be \textit{initially efficient} if $i(\nu_1, \omega) \leq N-1$ for all choices of reference arc, $\omega$. The authors of \cite{BMM} prove that there exists an initially efficient geodesic between any two vertices of $\mathcal{C}(S)$.

\begin{figure}
\centering
\includegraphics[scale=0.55]{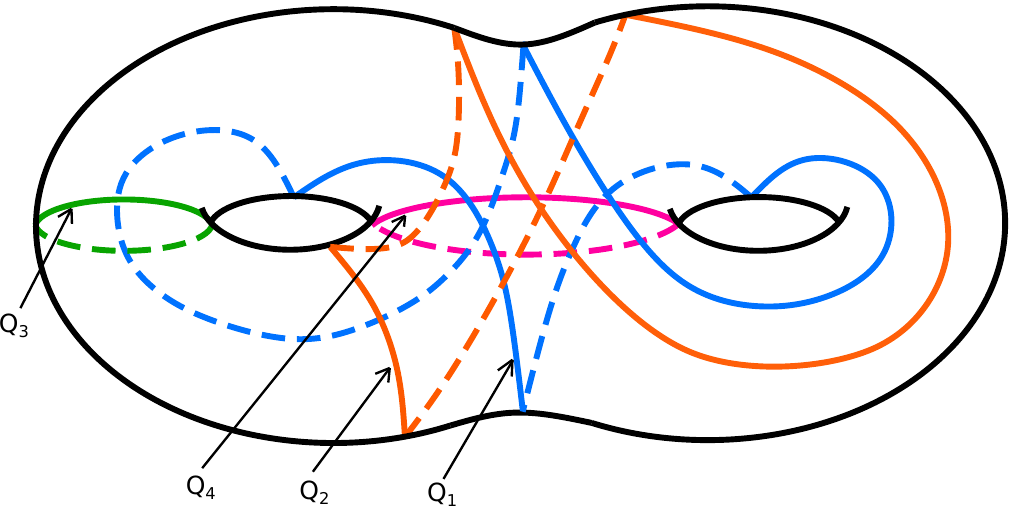}
\caption{$Q_1, Q_2, Q_3, Q_4$ is an example of an initially efficient geodesic in $\mathcal{C}(S_2)$}
\label{fig_gen_2_dist_3_eg}
\end{figure}

The following theorem from \cite{GMMM} gives a criterion for detecting vertices in $\mathcal{C}(S)$ at distance at-least $4$.

\begin{thm}[Theorem 1.3, \cite{GMMM}]\label{thm:dist_geq_4}
For the filling pair, $\kappa$, $\omega$, let $\Gamma \subset \mathcal{C}^0(S)$ be the collection of all vertices such that the following hold :
\begin{enumerate}
\item for $\overline{\gamma} \in \Gamma$, $d(\kappa, \overline{\gamma}) = 1$; and
\item for $\overline{\gamma} \in \Gamma$; for each segment, $b \subset \omega \setminus \kappa$, $i(\overline{\gamma}, b) \leq 1$.
\end{enumerate}
Then $d(\kappa, \omega) \geq 4$ if and only if $d(\overline{\gamma}, \omega) \geq 3$ for all $\overline{\gamma} \in \Gamma$.
\end{thm}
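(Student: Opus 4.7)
The plan is to prove the two directions independently. The forward direction reduces to a one-line triangle inequality, while the backward direction invokes the initially efficient geodesic machinery of Birman--Margalit--Menasco recalled in the Setup.

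For the forward direction, assume $d(\kappa, \omega) \geq 4$ and fix any $\overline{\gamma} \in \Gamma$. Condition (1) gives $d(\kappa, \overline{\gamma}) = 1$, so the triangle inequality yields
\[
d(\overline{\gamma}, \omega) \;\geq\; d(\kappa, \omega) - d(\kappa, \overline{\gamma}) \;\geq\; 4 - 1 \;=\; 3.
\]
Note that only condition (1) is used here; condition (2) plays its role in the converse, where it cuts $\Gamma$ down to a set small enough to yield useful information.

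For the backward direction I would argue by contrapositive. Suppose $d(\kappa, \omega) \leq 3$. Since $\kappa, \omega$ is a filling pair, no essential simple closed curve is disjoint from both, so $d(\kappa, \omega) \geq 3$; hence $d(\kappa, \omega) = 3$ exactly. Pick an initially efficient geodesic $\kappa = \nu_0, \nu_1, \nu_2, \nu_3 = \omega$ of length $N = 3$ and propose $\overline{\gamma} := \nu_1$ as the witness. Condition (1) is automatic, since $\nu_0, \nu_1$ span an edge. Once condition (2) is verified, the two-edge path $\nu_1, \nu_2, \omega$ gives $d(\overline{\gamma}, \omega) \leq 2 < 3$, contradicting the hypothesis and completing the proof.

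The main obstacle is verifying condition (2) for $\nu_1$, namely that $i(\nu_1, b) \leq 1$ for every segment $b \subset \omega \setminus \kappa$. My first attempt is to push each such $b$ slightly off $\omega$, fixing its endpoints on $\kappa$, to obtain a genuine reference arc $\widetilde{b}$ for the triple $(\nu_0, \nu_1, \nu_3)$ with $i(\nu_1, b) = i(\nu_1, \widetilde{b})$. Initial efficiency then gives $i(\nu_1, \widetilde{b}) \leq N - 1 = 2$, which is one shy of what is needed. To close this gap I would strengthen ``initially efficient'' to fully efficient in the BMM sense, so that sharper intersection bounds hold at every intermediate vertex, and then run a bigon/innermost-disk argument on the surface: a segment $b$ that realised $i(\nu_1, b) = 2$ should force a reducible configuration between $\nu_1$ and $\omega$ whose removal contradicts either the length or the efficiency of the geodesic. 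If a clean surgery argument proves elusive, a fallback is to select $\nu_1$ directly from the explicit BMM candidate list for length-$3$ geodesics and verify condition (2) on each candidate by construction, essentially folding the desired bound into the definition of the candidate set.
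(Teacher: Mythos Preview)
The paper does not prove this theorem; it is quoted from \cite{GMMM} as a black-box tool in the Setup section and then invoked in the proof of Theorem~\ref{thm:main}. There is therefore no proof in the present paper to compare your proposal against.

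On the merits of your argument: the forward direction is correct and is exactly the triangle-inequality one-liner you give. The backward direction is the substantive part, and your proposal does not close the gap it itself identifies. Taking an initially efficient geodesic $\kappa=\nu_0,\nu_1,\nu_2,\nu_3=\omega$ of length $N=3$ yields only $i(\nu_1,\widetilde{b})\le N-1=2$ for reference arcs, whereas membership in $\Gamma$ requires the bound $1$. Your three proposed repairs are not carried out and, as stated, do not succeed: the BMM notion of a (fully) efficient geodesic still gives the bound $N-k$ at the $k$-th vertex, which for $k=1$ and $N=3$ is again $2$; the ``bigon/innermost-disk'' sketch is not an argument---a segment $b$ meeting $\nu_1$ twice does not by itself force a bigon between $\nu_1$ and $\omega$, since the two intersection points can sit in distinct components of the complement; and the ``fallback'' of inspecting the BMM candidate list is essentially reproving the GMMM result from scratch rather than completing your argument.

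If you want to pursue this direction, the right statement to extract from \cite{GMMM} (and ultimately from \cite{BMM}) is sharper than generic initial efficiency: one shows that among all length-$3$ geodesics from $\kappa$ to $\omega$ there is one whose $\nu_1$ meets every arc of $\omega\setminus\kappa$ at most once, not merely at most twice. This is the content behind the theorem and is where the work lies; it is not a consequence of the inequality $i(\nu_1,\omega)\le N-1$ alone.
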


Let $\lambda$ and $\mu$ be two simple closed curves on $S_g$ and let $R_\lambda$ and $R_\mu$ be closed regular neighborhoods of $\lambda$ and $\mu$ respectively. We say that the $4$-tuple $(\lambda, \mu, R_\lambda, R_\mu)$ is \textit{amenable to Dehn twist in special position} if the following hold:
\begin{enumerate}
 \item $\lambda$ and $\mu$ intersect transversely and minimally on $S_g$,
 \item $\lambda$ and $\mu$ fill $S_g$,
 \item the number of components of $R_\lambda \cap R_\mu$ is equal to the number of components of $S \setminus (\lambda \cup \mu)$ and each of these components is a disc.
\end{enumerate}
When $\lambda$ and $\mu$ fill $S_g$, by considering a Euclidean model of $S_g$, it is easy to see that a $4$-tuple $(\lambda, \mu, R_\lambda, R_\mu)$ amenable to Dehn twist in special position always exists.

Consider a $4$-tuple $(\lambda, \mu, R_\lambda, R_\mu)$ which is amenable to Dehn twist in special position. Let $i(\lambda,\mu) = k$ and $K:= \{1, 2, ..., k\}$. We construct a curve in the isotopy class of $T_\lambda(\mu)$ which we call \textit{$T_\lambda(\mu)$ in special position w.r.t. the $4$-tuple $(\lambda, \mu, R_\lambda, R_\mu)$}. Start at any one of the components of $R_\lambda \cap R_\mu$ and label it as $A_1$. Since $\mu$ intersects $\lambda$ transversely, the arc $\mu_1$ of $\mu$ contained in $A_1$ which has its endpoints $X$ and $Y$ on boundary arcs of $R_\lambda$ is such that $X$ and $Y$ lie on distinct boundary component of $\partial R_\lambda$. We call the component of $\partial R_\lambda$ containing $X$ to be $\partial_{+}R_\lambda$ and the other component containing $Y$ to be $\partial_{-}R_\lambda$. Equip $A_1$ with the Euclidean metric such that it is a square in the $xy-$ plane. Two opposite sides of $A_1$ are formed from the arcs of $\partial R_\lambda$ and the two remaining sides formed from arcs of $\partial R_\mu$ and the $x$-axis lies along $\mu_1$ and the value of the $x$-coordinate increases from $X$ to $Y$. Orient $\mu_1$ from $X$ to $Y$. This induces an orientation on $\mu$. Next we pick $k$ distinct points $\{q_1, q_2,..., q_k\}$ in the interior of $\mu_1$ such that the $x$-coordinate of $q_i$ is greater than the $x$ coordinate of $q_j$ whenever $i>j$ and $i,j \in K$. For each $i \in K$, let $\lambda_i$ be a curve in $R_\lambda$ which is isotopic to $\lambda$ and passes through $q_i$. Further for each $i,j  \in K, i \neq j$ let $\lambda_i$ and $\lambda_j$ be disjoint. 

Orient $\lambda_1$ such that the $y$-coordinate on $\lambda_1$ increases when following this orientation in the disk $A_1$. Starting with $A_1$, label the subsequent disk components, $R_\lambda \cap R_\mu$, as $A_2, A_3, ..., A_k$, in the orientation of $\lambda_1$. For each $i \in K$, $A_i$ contains a unique arc of $\mu$ which we label as $\mu_i$. $\mu_i$ gets an induced orientation from $\mu$.  For each $i \in K$, equip $A_i$ with Euclidean metric and assume it to be a square in the $xy$-plane where $\mu_i$ lies along the $x$-axis with the $x$ coordinate increasing along the orientation of $\mu_i$. Assume $A_i$ to be positioned such that $\mu_i$ is the line segment joining the mid-points of the left and right sides of the square. In this orientation, call the component of $\partial R_\mu$ which appears above $\mu_i$ as $\partial_{+}R_\mu$ and the the component of $\partial R_\mu$ below $\mu_i$ as $\partial_{-}R_\mu$. However, note that the side of $A_i$ which is formed of the arcs of $\partial_{+}R_\lambda$ could either be to the right or to the left of this square. Accordingly, the side of $A_i$ which is formed of the arcs of $\partial_{-}R_\lambda$ could either be to the left or to the right of this square. For $i, j \in K$, by an isotopy inside $A_i$, we can assume that all the arcs of $\lambda_j$ in $A_i$ are straight lines.

For each $i,j \in K$, let $u_{i,j} := A_i \cap \lambda_j \cap \partial_{+}R_\mu$ and $v_{i,j} := A_i \cap \lambda_j \cap \partial_{-}R_\mu$. Also for each $i \in K$ let the left end point of $\mu_i$ in the square $A_i$ be $v_{i,0}$ and the right end point of $\mu_i$ in the square $A_i$ be $u_{i,k+1}$.  
Construct the Dehn twist of $\mu$ about $\lambda$ as follows: For each $j \in K \cup \{0\}$ draw line segments, $\theta_{i,j}$, connecting $v_{i,j}$ to $u_{i,j+1}$. $T_\lambda(\mu)$ is the curve $$((\mu \cup (\cup_{i \in K}\lambda_i)) \cap (S \setminus (\cup_{i \in K}A_i)) \cup (\cup_{i,j \in K}\theta_{i,j}).$$ The schematic, figure \ref{fig:DiskOfTransformation}, shows $A_i$ before and after this transformation. In the complement of $A_i$'s the transformation described above does not disturb the curves $\lambda_i$'s and $\mu$. In \cite{FM}, an algorithm to obtain the Dehn twist, $T_\lambda(\mu)$ has been described such that the curves in the discs of transformation are as in figure \ref{fig:DehnTwistSurgery}. The line segments in figure \ref{fig:DiskOfTransformation} are isotopic to the corresponding curves in \ref{fig:DehnTwistSurgery} which shows that the above transformation indeed results in $T_\lambda(\mu)$. When $T_\lambda(\mu)$ is constructed as above and as shown in figure \ref{fig:DiskOfTransformation}, we say that \textit{$T_\lambda(\mu)$ is in special position w.r.t. $\lambda$ and $\mu$}. We call the $k$ copies of $\lambda$, $\lambda_i$, $i \in K$, and $\mu$ to be the \textit {scaffolding for $T_\lambda(\mu)$}.  We call the Euclidean disks $A_i$, $i \in K$, along with the line segments $\theta_{i,j}$'s for $j \in K$ to be the \textit{disks of transformation for $T_\lambda(\mu)$}. The points $u_{i,j}$'s, $v_{i,j}$'s, $u_{i,k+1}$ and $v_{i,0}$ for $i, j \in K$ shall hold their meaning as defined in the context of the disks of transformations. So, using these phrases, when $T_\lambda(\mu)$ is in special position w.r.t. $\lambda$ and $\mu$, the scaffolding of $T_\lambda(\mu)$ remains unchanged outside its disks of transformation. Inside the disks of transformation for $T_\lambda(\mu)$, the schematic in figure \ref{fig:DiskOfTransformation} describes the changes to its scaffolding.

\begin{figure}
\centering
\includegraphics[scale=0.62]{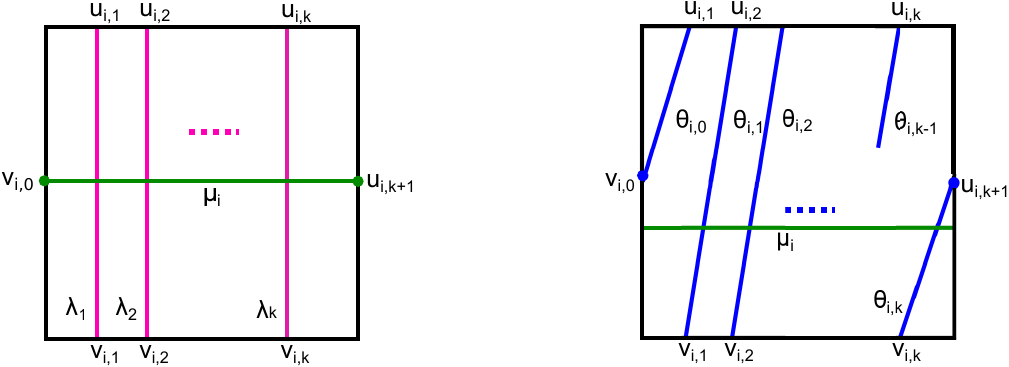}
\caption{Disk of transformation before (figure on the left) and after (figure on the right) the Dehn twist}
\label{fig:DiskOfTransformation}
\end{figure}

\begin{figure}
\centering
\includegraphics[scale=0.62]{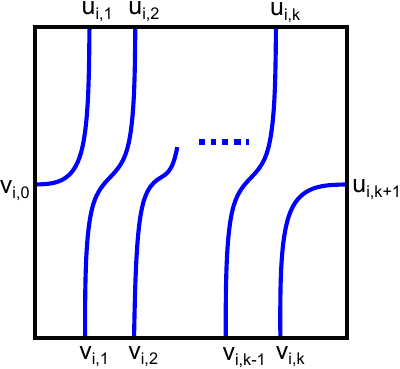}
\caption{Surgery of the curves to obtain $T_\lambda(\mu)$}
\label{fig:DehnTwistSurgery}
\end{figure}

\section{Distance 4 curves in $\mathcal{C}(S_{g \geq 2})$}
\label{section:main}

\begin{thm}\label{thm:main} Let $S$ be a surface of genus $g \geq 2$. Let $\alpha$ and $\gamma$ be two curves on $S$ with $d(\alpha, \gamma) = 3$. Then, $d(\tga, \alpha) = 4$.
\end{thm}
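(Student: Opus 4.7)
The plan is to prove $d(\alpha, \tga) \leq 4$ and $d(\alpha, \tga) \geq 4$ separately. The upper bound is immediate from the triangle inequality, using that $\tga$ is disjoint from $\gamma$:
\[ d(\alpha, \tga) \leq d(\alpha, \gamma) + d(\gamma, \tga) \leq 3 + 1 = 4. \]
The substance of the theorem is the matching lower bound.

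For the lower bound I would first invoke the preliminary inequality stated in the introduction, that $d(\alpha, \gamma) \geq 3$ forces $d(\alpha, \tga) \geq 3$; in particular $\alpha$ and $\tga$ fill $S$. This puts us in position to apply Theorem \ref{thm:dist_geq_4} with $\kappa = \alpha$ and $\omega = \tga$ after placing $\tga$ in special position with respect to $\gamma$ and $\alpha$, so that each segment of $\tga \setminus \alpha$ is visible in terms of the scaffolding and the disks of transformation. It then suffices to show $d(\oga, \tga) \geq 3$ for every $\oga$ satisfying conditions (1) and (2) of Theorem \ref{thm:dist_geq_4}. The case $d(\oga, \tga) = 1$ can be disposed of at once: such an $\oga$ would be disjoint from both $\alpha$ and $\tga$, hence contained in $S \setminus (\alpha \cup \tga)$, which is a union of open disks since $\alpha$ and $\tga$ fill --- contradicting the essentialness of $\oga$.

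The principal, and hardest, case is $d(\oga, \tga) = 2$: suppose an essential curve $\xi$ is disjoint from both $\oga$ and $\tga$. I would aim to build, out of $\xi$ and the special-position description of $\tga$, an essential curve disjoint from both $\alpha$ and $\gamma$ --- contradicting $d(\alpha, \gamma) = 3$. In the special position each segment of $\tga \setminus \alpha$ is either an arc running along $\alpha$ or an arc that tracks a scaffolding copy of $\gamma$, so $\xi$ being disjoint from $\tga$ confines it to thin strips parallel to $\alpha$ and to $\gamma$; meanwhile condition (2) strictly limits how $\oga$ can weave into $R_\gamma$ between consecutive crossings of $\alpha$. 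I expect the contradiction to emerge from a surgery inside the disks of transformation that pushes $\xi$ off $\gamma$ without introducing new crossings with $\alpha$, with condition (2) supplying exactly the combinatorial input needed for that surgery to close up into an essential simple closed curve. Verifying that this redirection always succeeds --- tracking the arcs of $\xi$ through each $A_i$ and controlling the resulting intersections with $\alpha$, $\gamma$, and $\oga$ --- is where I expect the main technical difficulty to lie.
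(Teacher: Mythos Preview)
Your upper bound argument contains a genuine error: $T_\gamma(\alpha)$ is \emph{not} disjoint from $\gamma$. Since $T_\gamma$ is a homeomorphism fixing the isotopy class of $\gamma$, one has $i(T_\gamma(\alpha),\gamma)=i(\alpha,\gamma)=k>0$; indeed $d(\gamma,\tga)=d(T_\gamma(\gamma),T_\gamma(\alpha))=d(\gamma,\alpha)=3$, so the triangle inequality through $\gamma$ only yields $d(\alpha,\tga)\le 6$. The paper obtains the bound $4$ by exhibiting an explicit path: if $\alpha=\nu_0,\nu_1,\nu_2,\nu_3=\gamma$ is a geodesic then $\nu_2$ is disjoint from $\gamma$, hence $T_\gamma(\nu_2)=\nu_2$, and $\alpha,\nu_1,\nu_2,T_\gamma(\nu_1),T_\gamma(\alpha)$ is a length-$4$ path.

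For the lower bound you apply Theorem~\ref{thm:dist_geq_4} with $(\kappa,\omega)=(\alpha,\tga)$ and then try to reach a contradiction by surgering a hypothetical $\xi$ with $d(\xi,\oga)=d(\xi,\tga)=1$ into a curve disjoint from $\alpha\cup\gamma$. The paper makes the opposite choice $(\kappa,\omega)=(\tga,\alpha)$ and argues \emph{directly}: for each $\oga$ disjoint from $\tga$ that meets every arc of $\alpha\setminus\tga$ at most once, it shows that $\oga$ and $\alpha$ fill $S$. The reason this orientation is the workable one is that disjointness from $\tga$ confines $\oga$ to the ``rectangular tracks'' of $R_\gamma\setminus\tga$; after normalising isotopies (the paper's $I_1,I_2,I_3$) each essential arc of $\oga\cap R_\gamma$ lying in a track $T_i$ covers every arc $\gamma_j$, $j\neq i-1$, of $\gamma\setminus\alpha$, so two arcs in distinct tracks already force $\oga\cup\alpha$ to fill, and the single-arc case is handled by an elementary cut-and-paste. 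Your swap of $\kappa$ and $\omega$ forfeits this structure: the segments of $\tga\setminus\alpha$ are roughly $k$ times as many and wind through $R_\gamma$, so condition~(2) gives far weaker control over $\oga$, and your surgery on $\xi$ is left as a hope rather than an argument.
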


\begin{proof} Let $\nu_0$, $\nu_1$, $\nu_2$, $\nu_3$ be a geodesic from the vertex $\nu_0$ corresponding to $\alpha$ to the vertex $\nu_3$ corresponding to $\gamma$ in $\mathcal{C}(S)$. Let $T_\gamma(\nu_0)$ be the vertex in $\mathcal{C}(S)$ corresponding to $\tga$. The existence of the path $T_\gamma(\nu_0)$, $T_\gamma(\nu_1)$, $T_\gamma(\nu_2)$ = $\nu_2$, $\nu_1$, $\nu_0$ gives that $d(\tga,\alpha) \leq 4$. We prove that $d(\tga,\alpha) \geq 4$ by using Theorem \ref{thm:dist_geq_4} with $\kappa = T_\gamma(\alpha)$ and $\omega = \alpha$, hence showing that $d(\tga, \alpha) = 4$.

\begin{clm}\label{clm:fill}
$\tga$ and $\alpha$ fill $S$. 
\end{clm}

\textit{Proof of claim \ref{clm:fill}}:
Let $i(\gamma, \alpha)=k$, $K:= \{1, 2, ..., k\}$, $K_{-1}:= \{1, 2, ..., k-1\}$ and $K_{2-2g}:= \{1, 2, ..., k+2-2g\}$ . We refer to section \ref{section:special_position} for the terminology used here. Since $\alpha$ and $\gamma$ fill $S$, there is a $4$-tuple $(\alpha, \gamma, R_\alpha, R_\gamma)$ which is amenable to Dehn twist in special position. Let $\tga$ be in special position w.r.t to $\alpha$ and $\gamma$. We denote the disks of transformation of $\tga$ by $A_i$, for $i \in K$. By an isotopy we assume the curve $\alpha$ to be disjoint from $\tga \setminus A_i$ for $i \in K$ and in each $A_i$ we further assume the arc $\alpha_i:= \alpha \cap A_i$ to be a straight line segment below the segment connecting $v_{i,0}$ and $u_{i,k+1}$. 

\begin{figure}
\centering
\includegraphics[scale=0.55]{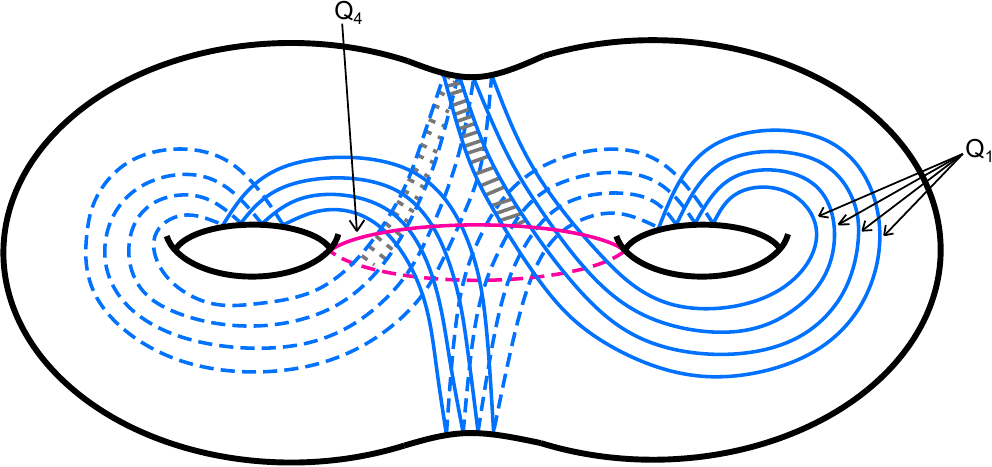}
\caption{The scaffolding for $T_{Q_1}(Q_4)$, where $Q_1$ and $Q_4$ are from example in figure \ref{fig_gen_2_dist_3_eg} and the shaded region is a rectangle of the scaffolding}
\label{fig:ScaffoldingOnGen2}
\end{figure}

For $i \in K$, let $g_i$ along with $\alpha$ be the scaffolding for $\tga$. For $j \in K_{-1}$, one of the components of $S \setminus \{g_j, g_{j+1}\}$ is an annulus, $G_j$. Any component of $G_j \setminus \alpha$ is a $4$-gon which we call as \textit{a rectangle of the scaffolding for $\tga$}. Figure \ref{fig:ScaffoldingOnGen2} shows an example of such a rectangle of the scaffolding. The disks $A_i$, $i \in K$, further divide each rectangle of the scaffolding into three components. There is a unique $i \in K$ such that $A_i$ and $A_{i+1}$ intersect a given rectangle of the scaffolding. Denote a rectangle of the scaffolding formed out of $G_j$ with its arcs of $\alpha$ lying in $A_i$ and $A_{i+1}$ by $B_{i,j}$. Denote the sub-rectangles $B_{i,j} \cap A_{i}$, by $C'_{i,j}$ and $B_{i,j} \cap A_{i+1}$, by $C''_{i+1,j}$. Also let $B'_{i,j} := B_{i,j} \setminus (C'_{i,j} \cup C''_{i+1,j})$.  Let $$B = \cup_{i=1}^k \cup_{j=1}^{k-1} B_{i,j}. $$ $S \setminus (\alpha \cup \gamma)$ has $k+2-2g$ disk components by Euler characteristic considerations. If $F_p$ is a disk component of $S \setminus (\alpha \cup \gamma)$, for some $p \in K_{2-2g}$, then $F'_p := F_p \setminus B$ is a single disk as $B$ intersects any $F_p$ only in disks which contain a boundary arc of $F_p$, namely arcs of $\gamma$. The components of $S \setminus (\alpha \cup g_1 \cup \dots \cup g_k)$ comprise of $k(k-1)$ rectangles of the scaffolding for $\tga$, namely $B_{i,j}$ where $i \in K$, $j \in K_{-1}$, and $k+2-2g$ even sided polygonal discs, namely $F'_p$, where $p \in K_{2-2g}$. Let $F''_p$ denote $F'_p \setminus R_\alpha$ for $p \in K_{2-2g}$.

\begin{figure}
\centering
\includegraphics[scale=0.62]{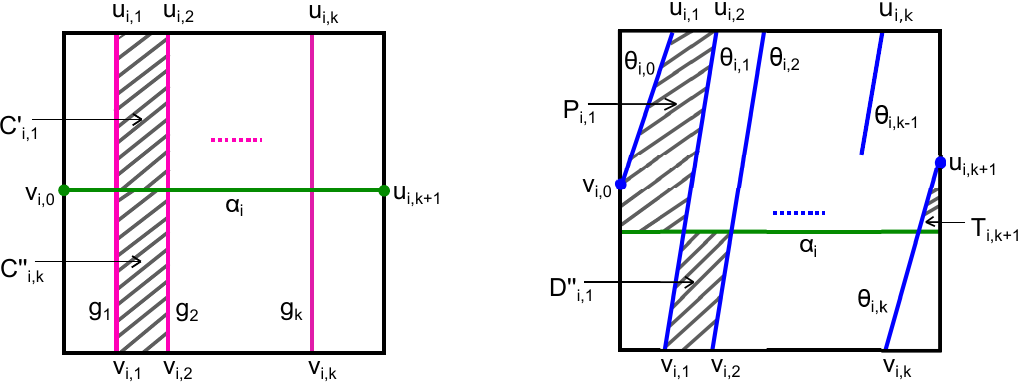}
\caption{The disk of transformation for $\tga$ : the figure on the left shows the portion of the scaffolding for $\tga$; the figure on the right shows the pentagon $P_{i,1}$, the triangle $T_{i,k+1}$ and the parallelograms formed due to $\alpha_i$ and $\tga$}
\label{fig:ShadedTransfDisk}
\end{figure}

For each $j \in K$ let $w_{i,j} := \theta_{i,j} \cap \alpha_i$. For each $i \in K$ and $j \in K_{-1}$, let $D''_{i,j}$ be the parallelogram with vertices $v_{i,j}, v_{i,j+1}, w_{i,j}$ and $w_{i,j+1}$ and $D'_{i,j+1}$ be the parallelogram with vertices $w_{i,j}, w_{i,j+1}$, $u_{i,j+1}, u_{i,j+2}$.  In each disk $A_i$, for $i \in K$, there is a pentagon, $P_{i,1}$, which is above $\alpha_i$ and bounded by the lines $\theta_{i,0}$, $\partial R_\gamma$, $\alpha_i$, $\theta_{i,1}$ and the line segment of $\partial_{+} R_\alpha$ between $u_{i,1}$ and $u_{i,2}$. Likewise, in each disk $A_i$, for $i \in K$, there is a triangle, $T_{i,k+1}$, which is bounded by the lines $\alpha_i$, $\theta_{i,k}$ and $\partial R_\gamma$. Figure \ref{fig:ShadedTransfDisk} shows a schematic before and after the transformation to the disk $A_i$; the figure to the left shows the rectangles $C'_{i,1}$ and $C''_{i,k}$ and the figure on the right shows $P_{i,1}$ and $T_{i,k+1}$.

Figure \ref{fig:DTInRAlpha} shows a schematic of $R_\alpha$ before and after the transformation to the scaffolding of $\tga$. The shaded region in the figure on the left shows $C'_{i,j}$ and $C''_{i,j-1}$ for some indices $i,j$. The shaded region in the figure on the right shows $D'_{i,j}$ and $D''_{i,j-1}$ for some indices $i,j$.

\begin{figure}
\centering
\includegraphics[scale=0.6]{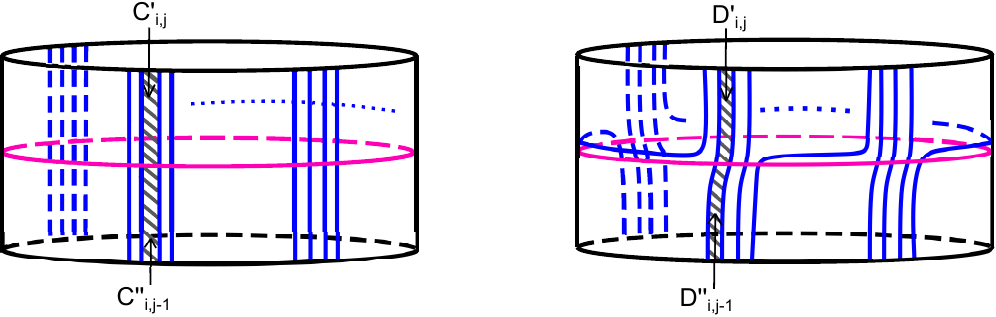}
\caption{A schematic of $R_\alpha$ (figure on the left) and after (figure on the right) the Dehn twist}
\label{fig:DTInRAlpha}
\end{figure}

For $i \in K$, note that all the disks $A_i$, occur in some sequence in the annulus $R_\alpha$ when moving along $\alpha$. So, a disk $A_i$ is connected to some disk $A_j$ on the left and to some other disk $A_p$ on the right by a single arc of $\alpha \setminus R_\gamma$, for some distinct indices $i, j, p \in K$. The schematic for two disks $A_i$ and $A_j$, for some $i, j \in K$, which are connected via a single arc of $\alpha \setminus R_\gamma$ and an arc of $\tga \setminus R_\gamma$ is as shown in the figure \ref{fig:AdjDOT}. Note that this schematic is generic since for every $j \in K$, there is a distinct $i \in K$ such that $A_j$ occurs to the left of $A_i$, in the sense mentioned above. 

\begin{figure}
\centering
\includegraphics[scale=0.63]{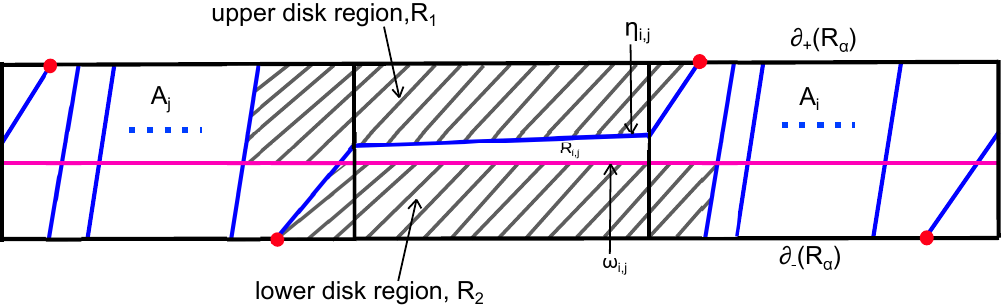}
\caption{Two adjacent disks of transformation in $R_\alpha$}
\label{fig:AdjDOT}
\end{figure}

Figure \ref{fig:AdjDOT} is a schematic of a portion of figure \ref{fig:DTInRAlpha}. 
In any of the cases,\textit{viz.} $\partial_+ R_\gamma$ and $\partial_+ R_\gamma$ face each other, $\partial_+ R_\gamma$ and $\partial_- R_\gamma$ face each other or $\partial_- R_\gamma$ and $\partial_- R_\gamma$ face each other, 
In this schematic, we see that the pentagon $P_{i,1}$ of the disk $A_i$ is connected to the triangle $T_{j,k+1}$ of $A_j$ via an arc of $\alpha \setminus R_\gamma$, $\omega_{i,j}$, and an arc of $\tga$, $\eta_{i,j}$. The disk, $R_{i,j}$ outside $R_\gamma$ bounded by $\omega_{i,j}$, $\eta_{i,j}$ and two arcs of $\partial R_\gamma$, will be called a \textit{conduit}. Equip the conduit with the Euclidean metric and assume that $R_{i,j}$ is a rectangle with two opposite sides $\omega_{i,j}$ and $\eta_{i,j}$. Now $P_{i,1} \cup R_{i,j} \cup T_{j,k+1}$ is a $4$-gon bounded by four arcs \textit{viz.} (i) $\theta_{i,0} \cup \eta_{i,j} \cup \theta_{j,k}$, (ii) $\alpha_j \cup \omega_{i,j} \cup \alpha_i$, (iii) $\theta_{i,1}$ and (iv) the arc of $\partial_{+}R_\alpha$ between $u_{i,1}$ and $u_{i,2}$. This protracted $4$-gon will be denoted by $D'_{i,1}$.

Let $S' = S \setminus R_\alpha$. The components of $S \setminus (\alpha \cup \tga)$  are the components of $S' \setminus  \tga$ and the components of $R_\alpha \setminus (\alpha \cup \tga)$ glued at the boundary of $R_\alpha$. Since the changes to the scaffolding of $\tga$ is restricted to $R_\alpha$, the components of $S' \setminus \tga$ are precisely the disc components of $S' \setminus (g_1 \cup \dots \cup g_k)$.

The components of $S' \setminus (g_1 \cup \dots \cup g_k)$ are $B'_{i,j}$, $i \in \{1, 2 \dots k\}$, $j \in K_{-1}$, along with disks $F''_p$, $p \in K_{2-2g}$, as explained above. The components of $R_\alpha \setminus (\alpha \cup \tga)$ will be examined using the schematic figure \ref{fig:AdjDOT} of a portion of $R_\alpha$. There are four kinds of regions in $R_\alpha$. The upper disk regions, like $R_1$ in the schematic figure \ref{fig:AdjDOT}, the lower disk regions, like $R_2$ in the schematic figure \ref{fig:AdjDOT}, and the disks $D'_{i,j}$, $D''_{i,j}$, $i \in K$, $j \in K_{-1}$. Figure \ref{fig:AdjDOT} shows how the upper and lower disk regions are glued to disks $F''_p$ for $p \in K_{2-2g}$. For each $p \in K_{2-2g}$, after gluing the lower disk regions and the upper disk regions to the respective disks $F''_p$, we get disks which we denote by $F'''_p$. We know that $F'''_p$ is a disk because the upper and the lower disk regions are disjoint, except for the points $w_{i,j}$ on the boundary and share a single arc of $\partial R_\alpha$ with a unique $F''_p$. For each $p \in K_{2-2g}$, we call $F'''_p$ to be the \textit{modified disk} corresponding to the initial disk $F_p$.

For each $i \in K$ and $j \in K_{-1}$, the line segment of $\partial_{+}R_\alpha$ between $u_{i,j}$ $u_{i,j+1}$ is the common boundary of $C'_{i,j}$ and $D'_{i,j}$. Likewise, for each such $i, j$, the line segment of $\partial_{-}R_\alpha$ between $v_{i,j}$ $v_{i,j+1}$ is the common boundary of $C''_{i,j}$ and $D''_{i,j}$. So, for such $i, j$, when considering the components of $S \setminus  (\alpha \cup g_1 \cup \dots \cup g_k)$ the rectangular core $B'_{i,j}$ is connected to $C'_{i,j}$ along the boundary segment $u_{i,j}$ $u_{i,j+1}$ and to $C''_{i+1,j}$ along the boundary segment $v_{i+1,j}$ $v_{i+1,j+1}$, whereas when considering the components of $S \setminus (\alpha \cup \tga)$, the rectangular core $B'_{i,j}$ is connected to $D'_{i,j}$ along the boundary segment $u_{i,j}$ $u_{i,j+1}$ and $D''_{i+1,j}$ along the boundary segment $v_{i+1,j}$ $v_{i+1,j+1}$. So the rectangles of the scaffolding for $\tga$, $B_{i,j}$, which are components of $S \setminus  (\alpha \cup g_1 \cup \dots \cup g_k)$, after the transformation in the disks of transformation for $\tga$ result in disks $E_{i,j}:= B'_{i,j} \cup D'_{i,j} \cup D''_{i+1,j}$ which now are components of $S \setminus (\alpha \cup \tga)$. For each $p \in K_{2-2g}$, $F'''_p$ is a disk as seen earlier. The components of $S \setminus (\alpha \cup \tga)$ are precisely the disks $F'''_p$ and $E_{i,j}$ where $p \in K_{2-2g}$, $i \in K$ and $j \in K_{-1}$. This proves that the components of $S \setminus (\alpha \cup \tga)$ are all disks and hence proving Claim \ref{clm:fill}.

The components of $R_{\gamma} \setminus \tga$ are disks and their boundary consists of two arc segments of $\tga$ and one each of $\partial_{+}R_\gamma$ and $\partial_{-}R_\gamma$. We call these disks as \textit{rectangular tracks}. The word tracks derives its motivation from how these tracks appear in $R_\gamma$. Figure \ref{fig:Tracks} shows $R_{\gamma}$ and rectangular tracks inside $R_\gamma$.

\begin{figure}
\centering
\includegraphics[scale=0.6]{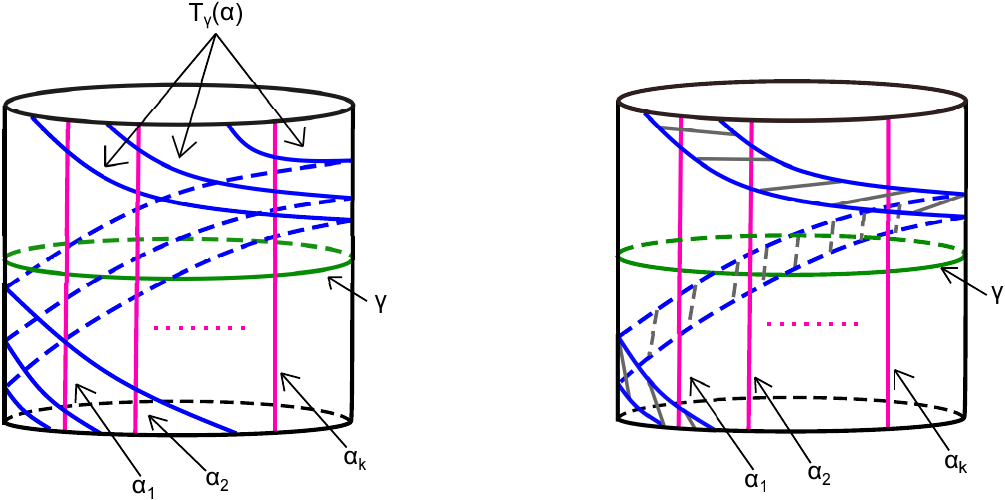}
\caption{The rectangular tracks shown inside the annulus $R_\gamma$}
\label{fig:Tracks}
\end{figure}

Since $i(\alpha, \gamma) = k$, there are $k$ components of $\alpha \cap R_\gamma$. Every component of $\alpha \setminus \tga$ is either contained in $R_\gamma$ or, has a sub-arc which is contained in $R_\gamma$. For any $i \in K$, $\alpha_i$ intersects the rectangular tracks. 

Let $i_0\in K$. In the schematic figure \ref{fig:ShadedTransfDisk}, $A_{i_0}$ has exactly $k+1$ arcs of $\tga$. Call $\theta_{{i_0},0}$ to be the \textit{leftmost arc of $A_{i_0}$} and $\theta_{{i_0},k}$ to be the \textit{rightmost arc of $A_{i_0}$}. Let us consider one component of $\tga \cap R_\gamma$, call it $\rho_{i_0}$, which intersects $A_{i_0}$ in its leftmost arc. This $\rho_{i_0}$ intersects $A_{i_0}$ precisely in the arcs  $\theta_{{i_0},0}$ and $\theta_{{i_0},k}$ and it intersects $A_j$ for every $j \in K \setminus \{i_0\}$ in the arcs $\theta_{j,m}$ where $m=(j-i_0)(mod\; k)$. This is easily seen from the construction of $\tga$ in special position w.r.t. $\alpha$ and $\gamma$. From this discussion it is clear that $\rho_{i_0}$ intersects each $\alpha_j$, for $j \in K$, exactly once. It is also clear that, for $j \in K$, the points of $\rho_{i_0} \cap \alpha_j$ lie on $\rho_{i_0}$ in the order $\alpha_{{i_0}+1}, ..., \alpha_k, \alpha_1, ..., \alpha_{{i_0}-1}, \alpha_{i_0}$ when  $\rho_{i_0}$ is traversed from $\partial_{+}R_\gamma$ to $\partial_{-}R_\gamma$. We now consider two arc components, $\rho_{i_0}$ and $\rho_{i_0+1}$, of $T_{\gamma}(\alpha) \cap R_{\gamma}$ and the rectangular track, $T_{i_0}$, which is enclosed by these two components in $R_\gamma$. We equip this rectangular tracks $T_{i_0}$ with the Euclidean metric so that the boundary arcs $\rho_{i_0}$, $\rho_{i_0+1}$, and the arcs of $T_{i_0} \cap \partial R_\gamma$ are all straight lines and so that $T_{i_0}$ is a rectangle. We refer to $T_{i_0} \cap \partial_{+}R_\gamma$ as the left end of the rectangle and $T_{i_0} \cap \partial_{-}R_\gamma$ as the right end of this rectangular track. We can draw the arcs of $\alpha_j$, for $j \in K$, as straight line segments in the rectangular tracks $T_{i_0}$. Figure \ref{03} shows a schematic of $T_i$ where $i \in K$.

\begin{figure}
\centering
\includegraphics[scale=0.65]{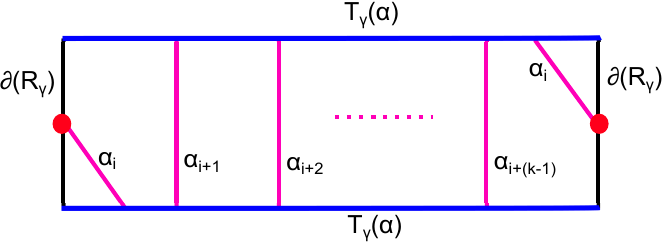}
\caption{A rectangular track $T_i$ along with arcs of $\alpha_i$ in it}
\label{03}
\end{figure}

From this schematic, at both the left and right end of this rectangular track $T_i$, $a_i$ is a common boundary to a triangle and a pentagon. We call $\alpha_i$ as the \textit{starting arc} of this rectangular track $T_i$.

Figure \ref{08} shows two possible schematics when $A_i$ is pictured in $R_\gamma$.

\begin{figure}
\centering
\includegraphics[scale=0.7]{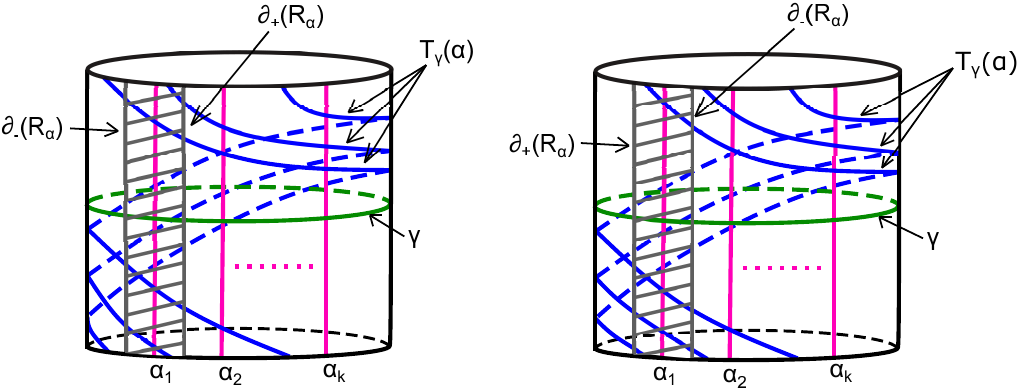}
\caption{$A_i$ shown inside $R_\gamma$ in the two possible ways : the figure on the left shows $\alpha_i$ oriented from top to bottom; the figure on the right shows $\alpha_i$ oriented from bottom to top}
\label{08}
\end{figure}

For any of the two possible cases observed in figure \ref{08}, a portion of one of the two pentagons of $T_i$ appears in the $A_i$ which is between $\alpha_i$ and $\partial_{+}R_\alpha$, where $\alpha_i$ is the starting arc of this track. We call this pentagon \textit{the upper pentagon of the rectangular track $T_i$}, owing to the viewpoint that $\partial_{+}R_\alpha$ is the upper boundary of $R_\alpha$. A portion of the other pentagon of $T_i$ appears in $A_i$ which is between $\alpha_i$ and $\partial_{-}R_\alpha$. We call this pentagon \textit{the lower pentagon of the rectangular track}. Likewise, we define the \textit{upper triangle} and the \textit{lower triangle} of a rectangular track $T_i$. 

Let $\oga \in \Gamma$ as in the statement of the Theorem \ref{thm:dist_geq_4}. We prove that $d(\oga, \alpha) \geq 3$ by showing that $\oga$ and $\alpha$ fill $S$. By Theorem \ref{thm:dist_geq_4}, this will imply that $d(\tga, \alpha) \geq 4$. 

It can be observed that $i(\oga \cap \alpha) \neq 0$ because if $\oga$ is disjoint from both $\alpha$ and $\tga$ then $\oga$ is non-essential as it will lie completely in one of the disc components of $S \setminus (\tga \cup \alpha)$. Since $d(\oga, \gamma) \geq d(\tga, \gamma) - d(\oga, \tga) = d(\tga, T_\gamma(\gamma)) - d(\oga, \tga) = 3 - 1 = 2$, we also conclude that $i(\oga \cap \gamma) \neq 0$. Since $\oga$ intersects $\gamma$, it intersects $R_\gamma$. It cannot be completely contained in $R_\gamma$ because every simple closed curve contained in an annulus bounds a disk or is isotopic to the core curve of the annulus. Since neither of these is true, it follows that that $\oga$ intersects $R_\gamma$ in arcs. Since $i(\oga, \tga) = \phi$, each component of $\oga \cap R_\gamma$ has to be completely contained in one of the rectangular tracks described by $\tga$. Such a component arc of $\oga$ could either be boundary reducible or essential in $R_{\gamma}$.

We consider an isotopy $I_1$ of $\oga$, as follows: In the case that a component arc of $\oga$ in $R_{\gamma}$ is boundary reducible in $R_{\gamma}$, we can perform the boundary reduction of $\oga$ preserving its minimal intersection position with $\alpha$ and $\tga$. This is possible because an arc of $\oga$ which is boundary reducible in $R_{\gamma}$ and is contained in the disk $T_i$ will bound a bigon with one boundary arc of $R_{\gamma}$ in $T_i$. Also, since $\oga$ was already in minimal intersection position with $\alpha$, it does not bound bigons with the arcs $\alpha_j$ inside $T_i$. Call the isotopy of $\oga$ which reduces all the boundary-reducible arcs of  $\oga \cap R_{\gamma}$ as $I_1$. After the isotopy $I_1$, we can assume that all the arcs of $\oga$ in $R_{\gamma}$ are essential. We know that there is at-least one component of $\oga \cap R_\gamma$ which is an essential arc of $R_\gamma$ as $\oga$ cannot be disjoint from $R_\gamma$. By the hypothesis that $i(\oga, b) \leq 1$ for $b \subset \alpha \setminus \tga$ each rectangular track can contain at-most one component of $\oga \cap R_\gamma$. 

Next, we describe an isotopy $I_2$ of $\oga$ such that all the points of $\oga \cap \alpha$ will lie inside $R_\gamma$ and so that no new boundary reducible arc components of $\oga \cap R_\gamma$ are introduced and $\oga$'s minimal intersection position with $\alpha$  and $\tga$ is retained. To this end, suppose that a point of $\oga \cap \alpha$ lies outside $R_\gamma$.

Following the construction of the disk $D'_{i,1}$ described above using figure \ref{fig:AdjDOT}, we see that the upper pentagon of the rectangular track $T_i$ is connected to the upper triangle of the rectangular track $T_j$ via a conduit $R_{i,j}$ where $i, j \in K$ are such that $A_j$ is to the left of $A_i$ in $R_\alpha$ as in schematic \ref{fig:AdjDOT}.

If a point of $\oga \cap \alpha$, $x_0$, lies outside $R_\gamma$, then it has to lie on $\omega_{i,j}$ for some $i$ and $j$ such that $i,j \in K, i \neq j$. We now refer to the dotted line in figure \ref{011c}. Since the intersection of $\oga$ and $\alpha$ is transverse, an arc of $\oga$, call it $\delta$ lies on the two sides of the conduit $R_{i,j}$, one inside and one outside $R_{i,j}$. The endpoint $P$ of the arc $\delta$ inside $R_{i,j}$ is also the endpoint of some other arc of $\oga$ as $\oga$ is a closed curve. If $P$ connects to an arc of $\oga$ lying in the upper triangular region of the track $T_j$, then an essential arc $\delta_1$ of $\oga \cap R_\gamma$ lies in $T_j$ with its endpoint $Q$ on $\partial R_\gamma$ in the upper triangle of $T_j$ so that $\delta$, the arc $PQ$ and $\delta_1$ together form a bigon with $\alpha$ contradicting the minimal intersection position of $\oga$ with $\alpha$. So, $P$ connects to an arc of $\oga$ in the upper pentagon in the track $T_i$ as is the dotted line in figure \ref{011c}. Consider an isotopy $I_2$ which slides the point $x_0$ onto $\alpha_i$. The image of the arc component of $\oga \cap R_\gamma$ which is in $T_i$, under $I_2$ has its endpoint in the lower triangle of $T_i$ and the image of $x_0$ lies in $R_\gamma$. A schematic for this isotopy $I_2$ is shown in figure \ref{011c}.

\begin{figure}
\centering
\includegraphics[scale=0.7]{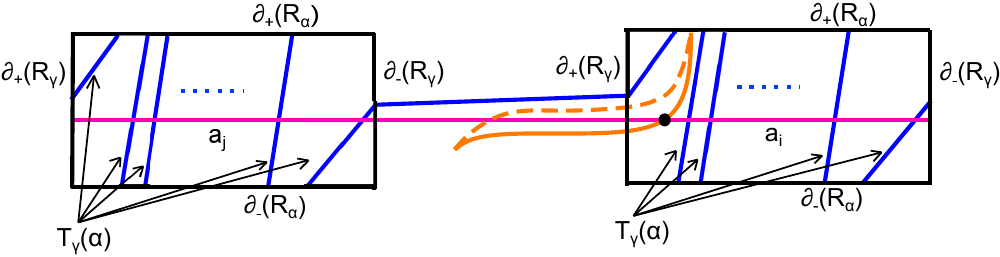}
\caption{The isotopy $I_2$ moving points of $\oga \cap \alpha$ into $R_\gamma$}
\label{011c}
\end{figure}

After finitely many such isotopies, we can now assume that all the points of $\oga \cap \alpha$ lie inside $R_\gamma$. Now consider an isotopy $I_3$ of $\oga$ as follows: If any of the components of $\oga \cap R_\gamma$ has its endpoint on the boundary of the upper triangle of $T_j$, for some $j \in K$ , then by the above discussion, $\oga$ cannot intersect $\omega_{i,j}$ or $\eta_{i,j}$, for some $i \in K$ such that the arcs of $T_i$ and $T_j$ forms the opposite sides of a conduit $R_{i,j}$. So $\oga \cap R_{i,j}$ is an arc $MN$ which has its endpoints $M \in T_j$ and $N \in T_i$ on $\partial R_\gamma$. Further, since $\oga$ is a closed curve, $\oga \cap T_i$ is an arc with its endpoint as $N$ such that $N$ necessarily lies in the upper pentagon of $T_i$. Conversely, if any of the components of $\oga \cap R_\gamma$ has its endpoint, $z_0$, on the boundary of the upper pentagon of $T_i$, then it should be connected to an arc, $g$, of $\oga$ in the conduit $R_{i,j}$. Note that the endpoints, $z_0, z_0'$ of $g$ are on $\partial R_\gamma$. There exists an arc component of $\oga \cap R_\gamma$ lying in $T_j$ such that $z_0'$ is on the boundary of the upper triangle of $T_j$, as the dotted line in figure \ref{011e} shows. If any such arc $g$ of $\oga$ exists, consider an isotopy, $I_3$, of $g$ such that the image, $I_3(g)$, lies outside $R_{i,j}$. A schematic of this is figure \ref{011e}. 

\begin{figure}
\centering
\includegraphics[scale=0.7]{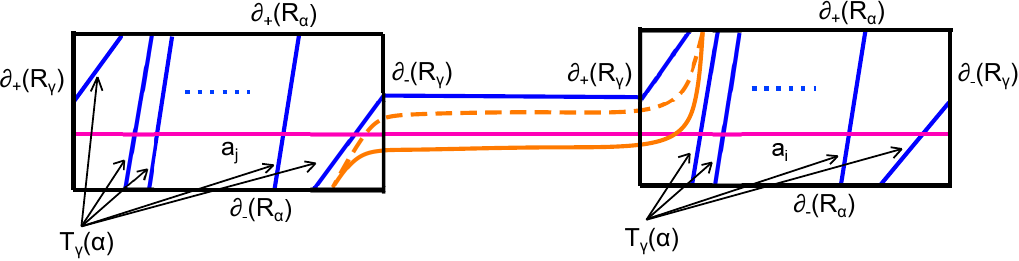}
\caption{A schematic showing the normalization move, the isotopy $I_3$}
\label{011e}
\end{figure}

The component of $\oga \cap R_\gamma$ in $T_j$ now has an endpoint on the boundary of the lower pentagon of $T_j$ and the component of $\oga \cap R_\gamma$ in $T_i$ has an endpoint on the boundary of the lower triangle of $T_i$. Also the image of $\oga \cap \alpha$ under $I_3$ moves a point of $\oga \cap \alpha$ from the boundary of the upper traingle of $T_j$ to the boundary of the lower pentagon of $T_i$. We call $I_3$ to be a \textit{normalization} move on $\oga$. After finitely many normalization moves performed on $\oga$, wherever applicable, we can assume that every component of $\oga \cap R_\gamma$ is contained in a rectangular track $T_i$ for some $i \in K$ such that the endpoints of that component lie on the boundary of the lower triangle and the lower pentagon of $T_i$. So a schematic of every component of $\oga \cap R_\gamma$ inside $T_i$ is as in figure \ref{012}.

\begin{figure}
\centering
\includegraphics[scale=0.7]{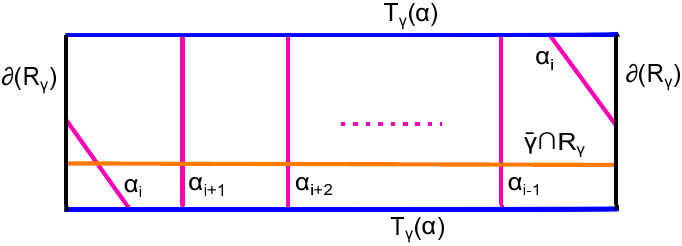}
\caption{The portion of $\oga$ in rectified position inside $T_i$}
\label{012}
\end{figure}

After these isotopies $I_1, I_2, I_3$ of $\oga$, we say that $\oga$ is in a \textit{rectified position}. We now prove that $\oga$ in rectified position and $\alpha$ fill $S$. From now on we assume that $\oga$ is in a rectified position.

For $i \in K$, let $H_i$ be the rectangular component of $R_\gamma \setminus (\cup_{i \in K}\alpha_i)$ containing the arcs $a_i$ and $a_{i+1}$ on its boundary. Each of these $H_i$ contains a unique segment, $\gamma_i$, of the core curve $\gamma$. The schematic \ref{09} shows $H_1$ and $\gamma_1$ for instance.

\begin{figure}
\centering
\includegraphics[scale=0.62]{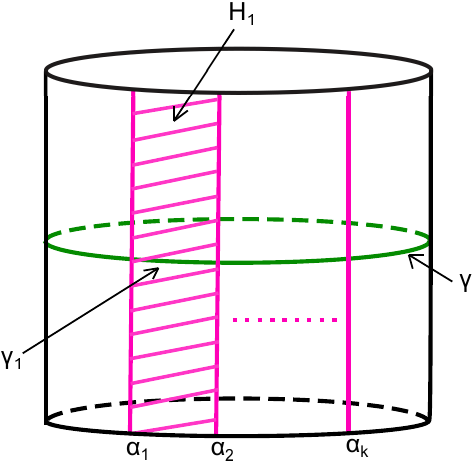}
\caption{Schematic showing $H_1$ and $\gamma_1$ in $R_\gamma$}
\label{09}
\end{figure}

We say that an arc, $g$ of $\oga$ \textit{covers $\gamma_i$} if $g \subset H_i$ has its end points on $\alpha_i$ and $\alpha_{i+1}$ and $g$ is isotopic in $H_i$ to $\gamma_i$ through arcs whose end points stay on $\alpha_i$, $\alpha_{i+1}$. Since $\gamma$ and $\alpha$ form a filling pair, the set of essential arcs, $\{\gamma_1, \dots, \gamma_k\}$ fill $S \setminus \alpha$. It follows that $\oga$ fills $S$ along with $\alpha$ if segments of $\oga \setminus \alpha$ cover $\gamma_i$ for all $i$ with $i \in K$.

Since $\oga$ is in rectified position, each component of $\oga \cap R_\gamma$ already covers all $\gamma_i$ except one as in figure \ref{012}. More precisely, if a component of $\oga \cap R_\gamma$ is in a rectangular track $T_i$, then $\oga$ covers every $\gamma_j$ where $j$ is such that $1 \leq j \leq k$ and $j \neq i-1$. So, if $\oga \cap R_\gamma$ has two distinct components, then each component has to lie in $T_i$ for distinct $i$ and hence $\oga$ covers $\gamma_j$ for $j \in \{1, 2, \dots k\}$. We conclude that $\oga$ and $\alpha$ fill $S$ in this case. Now it remains to show that if there is a single component of $\oga \cap R_\gamma$, which is an essential arc of $R_\gamma$ and is contained in some rectangular track $T_i$, then $\oga$ and $\alpha$ fill $S$. As in the previous case, $\oga$ covers every $\gamma_j$ where $j$ is such that $1 \leq j \leq k$ and $j \neq i-1$. The components of $S \setminus (\alpha \cup _{1 \leq j \leq k, j \neq i-1} \gamma_j)$ will be disks except possibly one which could be a cylinder. This can be seen as follows. Since $\alpha$ and $\gamma$ fill $S$, the components of $S \setminus \alpha \cup _{1 \leq j \leq k} \gamma_j$ are disks. Each segment of $\gamma_j \setminus \alpha$ for $j \in \{1,2,...,k\}$ contributes to two distinct edges of a component $J_0$ or two separate components $J, J'$ of $S \setminus \alpha \cup _{1 \leq j \leq k} \gamma_j$.

Let $P_1:= \oga \cap \alpha_i$ and $P_2:= \oga \cap \alpha_{i-1}$ be points in $T_i$ which appear on the unique component of $\oga \cap R_\gamma$. Let $[P_1, P_2]$ represent the arc of $\oga$ in $R_\gamma$ with endpoints $P_1$ and $P_2$ and $\oga_1:= \oga \setminus [P_1,P_2]$. $\oga_1$ is contained in all the components of $S \setminus \alpha \cup _{1 \leq j \leq k, j \neq i-1} \gamma_j$ which contain the arcs $\alpha_{i-1}$ and $\alpha_i$ on their boundary. We know that there is at-least one such component because $\gamma_{i-1}$ is also such an arc which joins $\alpha_{i-1}$ to $\alpha_i$. If $\gamma_{i-1}$ is the boundary of $J, J'$, then it would have been an arc which connected $\alpha_{i-1}$ on one disk to $\alpha_i$ on another disk. Note that both $\alpha_i$ and $\alpha_{i-1}$ are also boundary arcs of both $J$ and $J'$. So, we would find $P_1$ on the disk containing $\alpha_i$ and $P_2$ on the disk containing $\alpha_{i-1}$. When we join $J$ and $J'$ along $\gamma_{i-1}$ we get a disk where $\oga_1$ is an arc from $P_1$ to $P_2$ intersecting $\gamma_{i-1}$. Cutting along $\oga_1$ still yields two different disks. The schematic, figure \ref{013} shows this situation.

\begin{figure}
\centering
\includegraphics[scale=0.65]{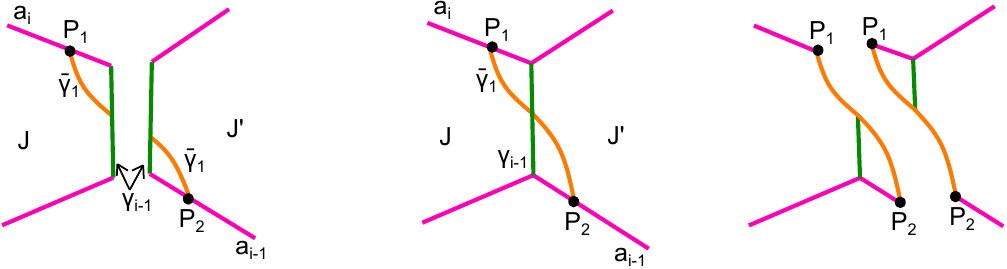}
\caption{The figure on the left shows disks $J$ and $J'$ formed by cutting along $\gamma_{i-1}$; the figure on the right shows the new disks formed when $J \cup J'$ are cut along $\oga_1$}
\label{013}
\end{figure}

If $\gamma_{i-1}$ were on the boundary of $J_0$ representing two edges of $J_0$ then it would have been an arc which connected $\alpha_{i-1}$  to $\alpha_i$. When we glue $J_0$ to itself along $\gamma_{i-1}$, we get a cylinder, $A$, where $\alpha_i$ and $\alpha_{i-1}$ will be arcs on different boundary components of $A$. So we would find $P_1$ and $P_2$ on distinct boundaries of $A$ and hence $\oga_1$ would be an essential arc on $A$. So cutting $A$ along this arc $\oga_1$ would yield a disk as shown in the schematic, figure \ref{014}.

\begin{figure}
\centering
\includegraphics[scale=0.65]{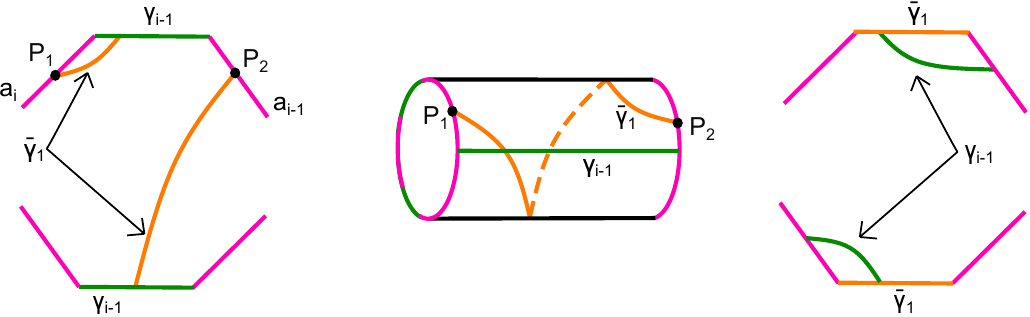}
\caption{The disk $J_0$ glued to itself along $\gamma_{i-1}$ and cut along $\oga_1$}
\label{014}
\end{figure}

In any case, we get disks by cutting $S \setminus \alpha$ along the arcs of $\oga \setminus \alpha$. So this proves the theorem.

\end{proof}

Claim \ref{clm:fill} of Theorem \ref{thm:main} doesn't use the hypothesis that $d(\alpha, \gamma) =3$. So we have the following:

\begin{cor}\label{cor:fill} If $\alpha$ and $\gamma$ are a pair of curves which fill $S$, then $\alpha$ and $\tga$ also fill $S$.
\end{cor}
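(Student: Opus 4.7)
The plan is to appeal directly to the machinery that was just developed in Claim \ref{clm:fill}. Rereading that argument, the hypothesis $d(\alpha,\gamma)=3$ is used only in the later portion of the proof of Theorem \ref{thm:main} (when $\oga \in \Gamma$ is normalized via the isotopies $I_1, I_2, I_3$ and Theorem \ref{thm:dist_geq_4} is invoked); the filling conclusion for $\alpha$ and $\tga$ relies on nothing beyond the fact that $\alpha$ and $\gamma$ fill $S$ and sit in minimal position. So the corollary should follow by extracting Claim \ref{clm:fill} as a standalone statement.

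Concretely, my plan is as follows. Since $\alpha$ and $\gamma$ fill $S$, the discussion in Section \ref{section:special_position} gives a 4-tuple $(\alpha, \gamma, R_\alpha, R_\gamma)$ amenable to Dehn twist in special position, and I realize $\tga$ in special position with respect to it. I then reproduce the cell decomposition from Claim \ref{clm:fill}: the scaffolding $\{g_1, \dots, g_k\} \cup \{\alpha\}$ cuts $S$ into the $k(k-1)$ rectangles $B_{i,j}$ together with the $k+2-2g$ polygonal disks $F_p'$ arising from the complementary regions of $\alpha \cup \gamma$. Next, I examine how the surgery inside $R_\alpha$ reshuffles these pieces: outside $R_\alpha$ nothing is disturbed, while inside $R_\alpha$ the segments $\theta_{i,j}$ replace the strips $C_{i,j}', C_{i,j}''$ by the parallelograms $D_{i,j}', D_{i,j}''$ and attach them to the pentagons $P_{i,1}$, triangles $T_{i,k+1}$, and conduits $R_{i,j}$. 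Regrouping then exhibits the components of $S \setminus (\alpha \cup \tga)$ as the disks $E_{i,j} := B_{i,j}' \cup D_{i,j}' \cup D_{i+1,j}''$ together with the modified disks $F_p'''$, all of which are $2$-cells.

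The only obstacle I anticipate is a bookkeeping one, namely making sure that every construction used above (the 4-tuple, the scaffolding, the disks of transformation, the parallelograms and conduits) is available from the filling hypothesis alone. This is straightforward by inspection of Section \ref{section:special_position} and of Claim \ref{clm:fill}: each construction is purely a feature of the Euclidean model of a filling pair, with no reference to the geodesic $\nu_0, \nu_1, \nu_2, \nu_3$ or to the distance in $\mathcal{C}(S)$. Hence the corollary is essentially a cosmetic extraction of Claim \ref{clm:fill}, and no genuinely new work is needed beyond pointing to the relevant paragraphs of that proof.
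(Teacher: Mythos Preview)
Your proposal is correct and matches the paper's own justification: the paper simply notes that Claim \ref{clm:fill} nowhere uses the hypothesis $d(\alpha,\gamma)=3$, so the filling conclusion for $\alpha$ and $\tga$ follows from the filling of $\alpha$ and $\gamma$ alone. Your recap of the cell decomposition is accurate but more detailed than necessary---the corollary is stated immediately after Theorem \ref{thm:main} precisely so that a one-line pointer to Claim \ref{clm:fill} suffices.
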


\begin{cor}\label{cor:upper_bound} For a surface of genus $g \geq 3$, $i_{min}(g,4) \leq (2g-1)^2$.
\end{cor}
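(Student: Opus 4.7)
The plan is to apply Theorem \ref{thm:main} to a minimally intersecting distance-$3$ pair produced by Aougab and Huang. For $g \geq 3$, the main result of \cite{AH1} supplies curves $\alpha, \gamma$ on $S_g$ with $d(\alpha,\gamma) = 3$ and $i(\alpha,\gamma) = 2g-1$. Theorem \ref{thm:main} then yields $d(\alpha, T_\gamma(\alpha)) = 4$, so the pair $\{\alpha, T_\gamma(\alpha)\}$ realizes distance $4$ in $\mathcal{C}(S_g)$; only the intersection bound $i(\alpha, T_\gamma(\alpha)) \leq (2g-1)^2$ remains to be verified.

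For this intersection estimate, the quickest route is the standard Dehn-twist inequality (see e.g.\ Farb--Margalit \cite{FM}),
$$\bigl| i(T_\gamma^n(\beta), \delta) - |n|\cdot i(\gamma,\beta)\cdot i(\gamma,\delta) \bigr| \leq i(\beta,\delta),$$
applied with $n = 1$ and $\beta = \delta = \alpha$. Since $i(\alpha,\alpha) = 0$, this collapses to the identity $i(\alpha, T_\gamma(\alpha)) = i(\alpha,\gamma)^2 = (2g-1)^2$, which immediately yields the corollary.

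The same bound is also transparent from the special-position construction of Section \ref{section:special_position}, which is really the geometric source of the $k^2$ term in the Dehn-twist formula: outside the disks of transformation, the scaffolding of $T_\gamma(\alpha)$ consists of $\alpha$ together with $k = 2g-1$ parallel copies $g_1,\dots,g_k$ of $\gamma$, each of which meets $\alpha$ in $k$ points, and the local surgery inside each $A_i$ (with $\alpha \cap A_i$ isotoped below the segment joining $v_{i,0}$ and $u_{i,k+1}$, exactly as arranged in the proof of Theorem \ref{thm:main}) produces no further crossings with $\alpha$. The resulting explicit representative of $T_\gamma(\alpha)$ therefore meets $\alpha$ in at most $k^2 = (2g-1)^2$ points, which upper-bounds the geometric intersection number.

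There is no serious obstacle once Theorem \ref{thm:main} is in hand; the role of the hypothesis $g \geq 3$ is solely to invoke the sharp equality $i_{min}(g,3) = 2g-1$ from \cite{AH1}, which holds precisely in that range.
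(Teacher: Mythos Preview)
Your proof is correct and follows essentially the same route as the paper: invoke \cite{AH1} for a distance-$3$ pair with $i(\alpha,\gamma)=2g-1$, apply Theorem~\ref{thm:main} to get distance $4$, and use $i(\alpha,T_\gamma(\alpha))=i(\alpha,\gamma)^2$. The only difference is that the paper simply asserts the intersection equality, whereas you supply two justifications (the Farb--Margalit inequality with $\beta=\delta=\alpha$, and the special-position count); both are fine and your added detail is an improvement in exposition rather than a change in strategy.
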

\begin{proof}
Aougab and Huang \cite{AH1} proved that $i_{min}(g,3) = 2g-1$ for $g \geq 3$. Now, on $S_g$, for $g \geq 3$, suppose that $\alpha$ and $\beta$ are two such minimally intersecting curves with $d(\alpha, \beta)=3$. Then $i(\alpha, T_\beta(\alpha)) = (2g-1)^2$ and by Theorem \ref{thm:main}, $d(\alpha, T_\beta(\alpha)) = 4$. So $i_{min}(g,4) \leq (2g-1)^2$.

\end{proof}

\section{An initially efficient geodesic}\label{ineff}

\begin{lem} If $\alpha = \nu_0, \nu_1, \nu_2, \nu_3 = \gamma$ is an initially efficient geodesic then so is $T_\gamma(\alpha), T_\gamma(\nu_1), \nu_2, \nu_1, \alpha$. \end{lem}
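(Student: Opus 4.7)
The plan is to first verify that the $5$-vertex sequence is a geodesic of length $4$ from $T_\gamma(\alpha)$ to $\alpha$, and then to prove the initial efficiency condition by transferring reference arcs for the new triple $(T_\gamma(\alpha), T_\gamma(\nu_1), \alpha)$ over to the original triple $(\alpha, \nu_1, \gamma)$ via the isometry $T_\gamma^{-1}$. For the geodesic claim: Theorem~\ref{thm:main} gives $d(T_\gamma(\alpha),\alpha)=4$ and the path has length $4$. Since $T_\gamma$ is a simplicial isometry of $\mathcal{C}(S)$, the edge $\alpha\nu_1$ maps to the edge $T_\gamma(\alpha)T_\gamma(\nu_1)$; as $\nu_2$ is disjoint from $\gamma$, $T_\gamma$ fixes the vertex $\nu_2$ in $\mathcal{C}^0(S)$, so $T_\gamma(\nu_1)\nu_2$ is an edge; the edges $\nu_2\nu_1$ and $\nu_1\alpha$ come from the original geodesic.

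For initial efficiency, let $\omega$ be any reference arc for $(T_\gamma(\alpha), T_\gamma(\nu_1), \alpha)$. I need $i(T_\gamma(\nu_1),\omega)\le N-1=3$. Setting $\omega_0 := T_\gamma^{-1}(\omega)$ yields an arc whose interior is disjoint from $\alpha\cup T_\gamma^{-1}(\alpha)$, with endpoints on $\alpha\cup T_\gamma^{-1}(\alpha)$, in minimal position with $\nu_1$, and with $i(\nu_1,\omega_0)=i(T_\gamma(\nu_1),\omega)$. The next step is to convert $\omega_0$ into a reference arc $\tilde\omega$ for the original triple $(\alpha,\nu_1,\gamma)$. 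Fix a thin annular neighborhood $N(\gamma)$ outside which $T_\gamma^{-1}(\alpha)$ and $\alpha$ coincide, and inside which $\alpha$, $T_\gamma^{-1}(\alpha)$, $\gamma$, and $\nu_1$ appear in the standard straight-across configuration coming from the special-position setup of Section~\ref{section:special_position}. I would then perform two types of local modifications to $\omega_0$ inside $N(\gamma)$: (i)~slide each endpoint of $\omega_0$ lying on $T_\gamma^{-1}(\alpha)\setminus\alpha$ onto $\gamma$ along a short arc in $N(\gamma)$; (ii)~if the interior of $\omega_0$ crosses $\gamma$, cut at the $\gamma$-crossings and retain the subarc carrying the most intersections with $\nu_1$. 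After (and possibly a small isotopy removing any newly created bigons with $\nu_1$), $\tilde\omega$ is a reference arc for $(\alpha,\nu_1,\gamma)$, and by initial efficiency of the original geodesic, $i(\nu_1,\tilde\omega)\le 2$.

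The main obstacle is the quantitative claim that the modifications above drop $i(\nu_1,\cdot)$ by at most one in total, i.e., $i(\nu_1,\tilde\omega)\ge i(\nu_1,\omega_0)-1$. This requires a careful local analysis in $N(\gamma)$, exploiting that $\omega_0$ is in minimal position with $\nu_1$ and that in the special-position configuration the strands of $\alpha$, $T_\gamma^{-1}(\alpha)$, and $\nu_1$ cross $N(\gamma)$ in a controlled straight-across pattern (compare Figure~\ref{fig:DiskOfTransformation}). Each endpoint slide and each cut is a move localized to $N(\gamma)$ whose effect on $i(\nu_1,\cdot)$ can be read off from this picture. Once the inequality is established, $i(T_\gamma(\nu_1),\omega)=i(\nu_1,\omega_0)\le i(\nu_1,\tilde\omega)+1\le 3$, giving initial efficiency of the new geodesic.
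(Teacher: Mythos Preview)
Your verification that the path is a length-$4$ geodesic is fine and matches the paper's implicit use of Theorem~\ref{thm:main}.

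The gap is precisely where you flag it. After applying $T_\gamma^{-1}$ you have an arc $\omega_0$ with interior disjoint from $\alpha\cup T_\gamma^{-1}(\alpha)$, and you must convert it into a reference arc $\tilde\omega$ for the triple $(\alpha,\nu_1,\gamma)$ while losing at most one $\nu_1$-intersection. You do not carry this out, and the two moves you propose do not obviously obey a ``drop by at most one'' bound. Move~(ii) is the problem: if the interior of $\omega_0$ meets $\gamma$ in $m$ points, cutting and keeping the richest piece only guarantees $\lceil i(\nu_1,\omega_0)/(m+1)\rceil$ intersections, so you would first have to prove that, after a suitable isotopy preserving $i(\nu_1,\omega_0)$, the interior of $\omega_0$ meets $\gamma$ at most once. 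That in turn requires knowing how $\gamma$ sits inside each disc component of $S\setminus(\alpha\cup T_\gamma^{-1}(\alpha))$, which is exactly the disc analysis of Claim~\ref{clm:fill} --- so the transport by $T_\gamma^{-1}$ has not actually bought you anything; it has only relocated the work.

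The paper avoids $T_\gamma^{-1}$ entirely and instead exploits the explicit combinatorial correspondence between polygons established in Claim~\ref{clm:fill}. Each complementary disc $F=F'''_p$ of $S\setminus(\alpha\cup T_\gamma(\alpha))$ is a $2n$-gon matched edge-for-edge with the disc $E=F_p$ of $S\setminus(\alpha\cup\gamma)$: the $\alpha$-edges correspond and each $T_\gamma(\alpha)$-edge of $F$ replaces a $\gamma$-edge of $E$. Under this correspondence an arc of $T_\gamma(\nu_1)$ in $F$ joining $\alpha$-edges $a_{j_1},a_{j_2}$ comes from an arc of $\nu_1$ in $E$ joining $\gamma_{j_1},\gamma_{j_2}$. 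Hence, if $\omega\subset F$ has endpoints on $t_p,t_q$, the straight segment $\omega'\subset E$ joining $\alpha_p$ to $\alpha_q$ is a reference arc for $(\alpha,\nu_1,\gamma)$ meeting at least as many $\nu_1$-arcs as $\omega$ meets $T_\gamma(\nu_1)$-arcs. This gives $i(T_\gamma(\nu_1),\omega)\le 2$ with no loss at all, stronger than the $\le 3$ you aim for. Your strategy could likely be completed, but only by reproducing this disc correspondence, at which point the detour through $T_\gamma^{-1}$ is redundant.
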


\begin{proof}
For $p \in K_{2-2g}$, let $F''_p$ be the components of $S \setminus \{\alpha, R_\gamma\}$ as in the proof of Theorem \ref{thm:main}. Since the geodesic $\alpha, \nu_1, \nu_2, \gamma$ is an initially efficient one, each segment of $\nu_1$ intersects every reference arc in $E_i$ at most twice. In particular, arcs of $\partial(R_\gamma)$ that form the edges of $E_i$ intersect $\nu_1$ at most twice. It follows from here that there are at the most two segments of $\nu_1$ in each rectangular track $T_i$ as defined in section \ref{section:main}. A schematic of this is shown in figure \ref{DT17}. Further, since the interior of a reference arc is disjoint from $\alpha \cup T_\gamma(\alpha)$, it is sufficient to check for the initial efficiency of the geodesic, $T_\gamma(\alpha), T_\gamma(\nu_1), \nu_2, \nu_1, \alpha$ in the modified disks $F'''_p$, abbreviated $F$, corresponding to $F_p$ , abbreviated $E$.

Since $E$ and $F$ are homeomorphic to a $2g$-gon. Without loss of generality assume $E$ and $F$ to be a regular Euclidean regular polygon with $2g$ sides. Starting at any segment of $\alpha$ in $E$, we label the edge as $\alpha_1$. Label the edges of $E$ in a clockwise direction, starting at $\alpha_1$ as $\gamma_1$, $\alpha _2$, $\gamma _2$, \dots , $\gamma _g$. Let $S' = S \setminus R_\gamma$. Since the components of $S' \setminus \{\alpha, \gamma\}$ and $S' \setminus \{\alpha, T_\gamma(\alpha)\}$ are the same, it follows that for every edge, $a_{j_0}$ in $F$ corresponding to $\alpha$, there exists a unique $i_0 \in \{1, \dots, g\}$ such that $a_{j_0} \subset \alpha_{i_0}$. Index the edges, $a_{j_0}$ of $F$ such that $j_0=i_0$. Label the edge of $\tga$ in $F$ between $a_i$ and $a_{i+1}$ as $t_i$. Let $\omega$ be a reference arc in $F$ with end points on $t_p$ and $t_q$ for some $p,q \in \{1, \dots, g \}$. Suppose to the contrary that $\omega \cap T_\gamma(\nu_1) \geq 3$. Then there exists three segments, $z_1$, $z_2$, $z_3$ of $T_\gamma(\nu_1)$ in $F$ such that $z_j \cap \omega \neq \phi$. For $j\in \{1,2,3\}$, let the end points of $z_j$ lie on $a_{j_1}$ and $a_{j_2}$. From our previous discussion on Dehn twist and figure \ref{DT18}, there exists arcs of $\nu_1$ in $E$ with end points on $\gamma_{j_1}$ and $\gamma_{j_2}$ for all $j \in \{1,2,3\}$. Consider a line segment, $\omega'$ in $E$ from an interior point of $a_p$ to an interior point of $a_q$. Then $\omega'$ is a reference arc for the triple, $\alpha$, $\nu_1$, $\gamma$ and $\omega' \cap \nu_1 \geq 3$. This contradicts that $\alpha$, $\nu_1$, $\nu_2$, $\gamma$ is an initially efficient geodesic. Hence, $\omega \cap T_\gamma(\nu_1) \leq 2$ for any choice of reference arc, $\omega$ for the triple $T_\gamma(\alpha)$, $T_\gamma(\nu_1)$, $\alpha$.

\begin{figure}
\centering
\includegraphics[scale=0.65]{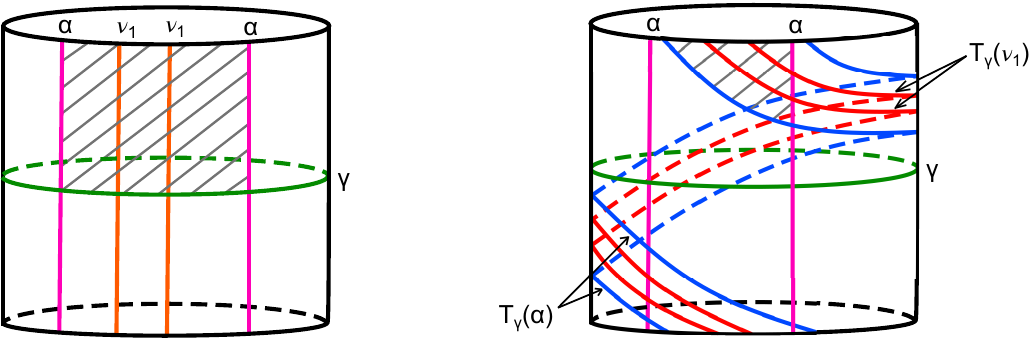}
\caption{There can be at-most two distinct segments of $T_\gamma(\nu_1)$ in any rectangular component of $S \setminus (\alpha \cup \tga)$ in $R_\gamma$}
\label{DT17}
\end{figure} 

\begin{figure}
\centering
\includegraphics[scale=0.7]{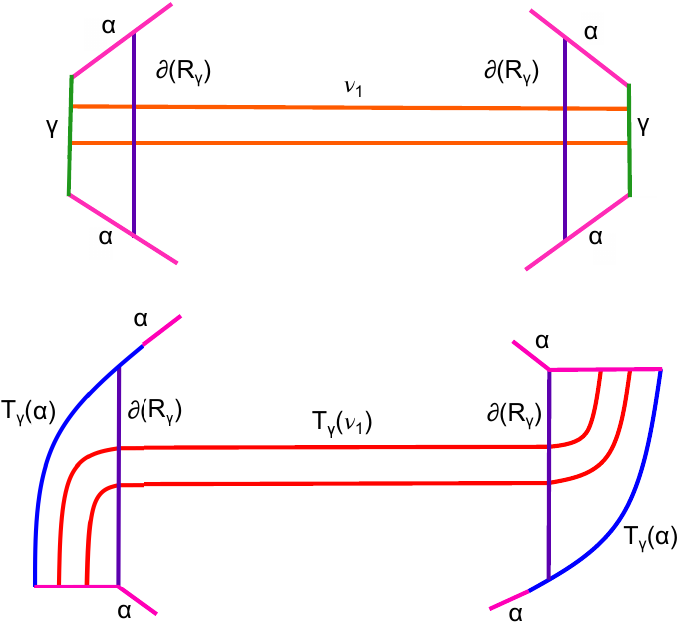}
\caption{Initial efficiency of $T_\gamma(a_1)$ follows from the initial efficiency of $a_1$}
\label{DT18}
\end{figure}

Since $T_\gamma(\alpha), T_\gamma(\nu_1), \nu_2, \nu_1, \alpha$ is already a geodesic we have that $d(T_\gamma(\nu_1), \alpha) =3$. This gives that $T_\gamma(\nu_1)$ is an initially efficient geodesic of distance $4$ from $T_\gamma(\alpha)$ to $\alpha$.

\end{proof}

\newpage

\vspace*{0.1in}
{\sc Kuwari Mahanta}, Department of Mathematics, Indian Institute of Technology Guwahati, Assam 781039, India, email : kuwari.mahanta@iitg.ac.in

\vspace*{0.1in}
{\sc Sreekrishna Palaparthi} (*corresponding author), Department of Mathematics, Indian Institute of Technology Guwahati, Assam 781039, India, email : passkrishna@iitg.ac.in

\end{document}